\author{Umar Hayat}
\title{A note on the canonical divisor of quasi-homogeneous affine algebraic varieties}
\date{Wednesday, October 24, 2012}
\address{Umar Hayat, School of Mathematical Sciences, University College Dublin, Belfield, Dublin 4, Ireland}
\email{umarmaths@gmail.com}
\numberwithin{thmNumber}{section}
\DeclareMathOperator{\GL}{GL}
\DeclareMathOperator{\Gr}{Gr}
\DeclareMathOperator{\Sec}{Sec}
\DeclareMathOperator{\Sp}{Sp}
\newcommand{\IC}{\mathbb{C}}
\newcommand{\IT}{\mathbb{T}}
\newcommand{\IP}{\mathbb{P}}
\newcommand{\cO}{\mathcal{O}}
\newtheorem{theorem}{Theorem}[section]
\newtheorem{remark}[theorem]{Remark}
\begin{document}
\begin{abstract}
We give a necessary and sufficient condition for the canonical divisor to vanish on a quasi-homogeneous affine algebraic variety.

\end{abstract}

\subjclass[2010]{Primary 14J60,14M17; Secondary 20C15}

\keywords{Quasi-homogeneous spaces, Vector bundles}

\maketitle

\pagestyle{myheadings}

\markboth{UMAR HAYAT}{Quasi-homogeneous affine algebraic varieties}

\section{Introduction}
Let $G$ be a reductive algebraic group and $V$ a complex representation of $G$. Let $H\subset G$ be the stabiliser of a  vector $v\in V$. The variety $\Omega_{0}:=G/H=G \cdot v \subset \overline{G/H}=\Omega \subset V$ is a homogeneous space with a natural left $G$-action. We assume throughout that $\Omega_{0}$ in $\Omega $ has complement of codimension $\geq 2 $, or equivalently that the orbit $\Omega_{0}$ intersects every divisor (irreducible subvariety of codimension one) of $\Omega $. Under this assumption, we say $\Omega_{0}$ is a {\em big} open orbit.

Suppose  $\bigcup U_{i}$ is an open cover of $\Omega_{0}$. If $K_{\Omega_{0}} = 0$ then a global generator $\sigma \in \cO(K)$ can be written on each $U_{i}$ as 
\[
\sigma=(d\xi_{1}\wedge \dots \wedge d\xi_{n})/f_{i}
\]
where $\xi_{i}$ are the coordinates and $f_{i}$ is an invertible function on $U_{i}$. We can use this representation in calculating examples, although it does not play any role in the general statement and proof of our main result Theorem \ref{s!theo}.

\section{Vector bundles on $G/H$}
We can think of the quotient space $G/H$ as the base space of a bundle. The group $G$ is the total space for this bundle with a standard projection map $\pi \colon G \longrightarrow G/H$ that takes $g$ to the coset $\overline{g}=gH$ . Each coset $gH \in G/H$ is fixed by $gHg^{-1}$ under the transitive action of $G$ on $G/H$. For each $\overline{g}$, we denote  the fibre over $\overline{g}$ by $\pi^{-1}(\overline{g})$, which is a copy of $H$. We can think of the total space of the bundle as a family of copies of $H$, parametrised by points of the base space. This bundle is called a principal $H$-bundle.  See \cite{Eqi,FH,Hum} and \cite[Chapter 9]{Naka} for the construction of vector bundles and line bundles on $G/H$ and properties of their Lie algebras.

Let $\rho$ be any representation of $H$ on a vector space $E$. We construct a $G$-equivariant vector bundle over $G/H$ with fibre $E$ as follows. We quotient out the product space $G\times E$ by the following action of $H$:
\[
(g,e)\in (G,E)\rightarrow (gh, \rho(h^{-1})e). 
\] 
We denote this quotient space by $G\times_{H} E$. It is a vector bundle because the fibre over each point $\overline{g}\in G/H$ is a copy of the vector space $E$. In general we can think of a vector bundle as a family of vector spaces of same dimension, parametrised by the base space. In particular each one-dimensional representation of $H$ gives us a line bundle.

To summarise the discussion above, let $\mathcal{C}$ be the category of $G$-equivariant vector bundles on $G/H$ and $\mathcal{D}$ the category of  representations of $H$. 
For a given functor $F\colon \mathcal{C}\rightarrow \mathcal{D}$ and an equivariant vector bundle $E$ over $G/H$ we get a representation of $H$ on the fibre $[H]$, where $[H]$ is the identity coset. On the other hand, given a functor $G\colon \mathcal{D}\rightarrow \mathcal{C}$, we can get a vector bundle over $G/H$ for a given finite dimensional  representation $E$ of $H$.

We denote Lie algebras of $G$ and $H$ by $\mathfrak{g}$ and $\mathfrak{h}$ respectively. The tangent bundle $T_{G/H}$ to $G/H$ comes from the representation $\mathfrak{g/h}$, where $\mathfrak{g/h}$ is the tangent space to $G/H$ at the identity $H$. The tangent space to any other $gH\in G/H$ is given by $\mathfrak{g}/g\mathfrak{h}g^{-1}$. The top wedge of the tangent bundle $\overset {\mathrm {top}}{\bigwedge} T_{G/H}$  gives rise to the anticanonical line bundle. It can be expressed as the product of those weights of $G$ which are not weights of $H$. The canonical class $K_{G/H}$ of the variety $G/H$ comes from the dual $\overset{\mathrm { top}}{\bigwedge} T^{\vee}_{G/H}$.

\begin{remark}
The tangent space to $G/H$ at $[H]$ is $\mathfrak{g}/\mathfrak{h}$. This corresponds to the $H$-module $\mathfrak{g}/\mathfrak{h}$ where $H$ acts on $\mathfrak{g}/\mathfrak{h}$ by the adjoint action. The line bundle corresponds to $M = \overset{\mathrm { top}}{\bigwedge} T^{\vee}_{G/H}$ on which $H$ acts as a character $\kappa \colon H\rightarrow \IC^{\ast}$. We consider the line bundle $K_{X}$ as a $G$-equivariant sheaf that corresponds to $\kappa$. 

\end{remark}

\section{Statement of the main result}\label{s!conj}
Let $G$ be a connected reductive algebraic group, $T$ a maximal torus of $G$ such that $\IT_{H}=T\cap H$ be the maximal torus of $H$ where $H$ is a closed connected subgroup of $G$.

Now we are ready to state and prove our main result.

\begin{theorem}\label{s!theo}
The canonical class $K_{G/H}$ of the homogeneous space $G/H$ is trivial if and only if the action of the restricted torus $\IT_{H}$ on $\mathfrak{g/h}$ has trivial determinant.
\end{theorem}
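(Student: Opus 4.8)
The plan is to translate the statement into a question about characters of $H$, via the correspondence between $G$-equivariant line bundles on $G/H$ and one-dimensional representations of $H$ described in Section~2. By the Remark there, $K_{G/H}$ is the $G$-equivariant line bundle attached to the character $\kappa\colon H\to\IC^{\ast}$ through which $H$ acts on $\overset{\mathrm{top}}{\bigwedge}(\mathfrak{g}/\mathfrak{h})^{\vee}=\det(\mathfrak{g}/\mathfrak{h})^{\vee}$; equivalently $\kappa=\bigl(\det\mathrm{Ad}_{\mathfrak{g}/\mathfrak{h}}\bigr)^{-1}$, where $\mathrm{Ad}_{\mathfrak{g}/\mathfrak{h}}$ denotes the adjoint action of $H$ on the tangent representation $\mathfrak{g}/\mathfrak{h}$. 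The first step is then the reduction
\[
K_{G/H}\ \text{trivial}\ \Longleftrightarrow\ \kappa=1 .
\]
For $\Leftarrow$ this is immediate, since $G\times_{H}\IC_{1}=(G/H)\times\IC$. For $\Rightarrow$ one uses that a trivialisation of $K_{G/H}$ amounts to a nowhere-vanishing $f\in\cO(G)$ with $f(gh)=\kappa(h)^{-1}f(g)$, and Rosenlicht's description $\cO(G)^{\times}=\IC^{\ast}\cdot\mathfrak{X}(G)$ then identifies $\kappa$ with the restriction to $H$ of a character of $G$; together with the fact that $\kappa$ is a determinant of the adjoint action on $\mathfrak{g}/\mathfrak{h}$ and that $G$ is connected reductive, this is what should force $\kappa=1$.

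Granting the reduction, it remains to see that $\kappa=1$ on $H$ if and only if the action of $\IT_{H}$ on $\mathfrak{g}/\mathfrak{h}$ has trivial determinant. Since restriction of representations commutes with taking the top exterior power, $\kappa|_{\IT_{H}}=\bigl(\det(\IT_{H}\curvearrowright\mathfrak{g}/\mathfrak{h})\bigr)^{-1}$; hence $\kappa|_{\IT_{H}}=1$ is precisely the condition that $\IT_{H}$ act on $\mathfrak{g}/\mathfrak{h}$ with trivial determinant. So the whole theorem reduces to the claim that a character of $H$ which is trivial on $\IT_{H}$ is trivial on all of $H$. This I would deduce from the structure of the connected group $H$: every $\chi\colon H\to\IC^{\ast}$ kills $[H,H]$ (as $\IC^{\ast}$ is abelian) and the unipotent radical $R_{u}(H)$ (as $\IC^{\ast}$ has no nontrivial unipotent elements), so it factors through the torus $H/\bigl(R_{u}(H)[H,H]\bigr)$, onto which the maximal torus $\IT_{H}$ surjects; thus $\chi$ is determined by $\chi|_{\IT_{H}}$, and triviality on $\IT_{H}$ gives triviality on $H$. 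Assembling the three steps proves the theorem.

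The step I expect to be the real obstacle is the reduction $K_{G/H}\ \text{trivial}\Leftrightarrow\kappa=1$, and within it the direction $\Rightarrow$: as a statement about $G$-equivariant line bundles it is essentially tautological once the dictionary of Section~2 is in place, but one must be careful about the precise sense in which the canonical class is declared "trivial", and passing from equivariant triviality to ordinary triviality (which is what is wanted for the volume-form description in the introduction) is where the units computation for $\cO(G)$, and the big-open-orbit hypothesis on $\Omega$, have to be brought to bear. Once that reduction is secured, everything after it is routine bookkeeping with characters and determinants.
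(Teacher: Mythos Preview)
Your approach is essentially the paper's. The heart of both arguments is the same two--step reduction: first, the equivalence between $G$-equivariant line bundles on $G/H$ and characters of $H$ identifies $K_{G/H}$ with the character $\kappa$; second, any character of the connected group $H$ is determined by its restriction to a maximal torus $\IT_{H}$. For that second step the paper kills $[H,H]$, decomposes the abelianisation $H_{a}\cong V\times T_{1}$ into a unipotent factor and a torus, drops $V$ (no nontrivial characters), and invokes the surjection $\IT_{H}\twoheadrightarrow T_{1}$. Your variant---killing $[H,H]$ and $R_{u}(H)$ and then using the same surjection onto the resulting torus---is the same argument in slightly different packaging.

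The one place you go beyond the paper is in scrutinising the reduction ``$K_{G/H}$ trivial $\Leftrightarrow \kappa=1$''. The paper simply appeals to the category equivalence, which strictly speaking yields \emph{equivariant} triviality; it does not separately treat ordinary triviality. You flag this and reach for Rosenlicht's $\cO(G)^{\times}=\IC^{\ast}\cdot\mathfrak{X}(G)$, which is the natural tool. But your own sketch does not close the gap either: Rosenlicht only gives that $\kappa$ is the restriction to $H$ of some $\chi\in\mathfrak{X}(G)$, and the clause ``together with the fact that $\kappa$ is a determinant of the adjoint action \dots\ this is what should force $\kappa=1$'' is an assertion, not an argument (and is not true for arbitrary $H\subset G$). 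On the portion the paper actually proves, you match it; the residual subtlety you single out is real but is left unresolved both in the paper and in your proposal.
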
 

\begin{proof}
The first observation is that the category $\mathcal{C}$ of $G$-equivariant vector bundles on $G/H$ and the category $\mathcal{D}$ of representation of $H$ are equivalent; see for example \cite{Eqi}, page $1$.

Since $M$ has rank  $1$, it is trivial on the commutator $[H, H]$ and actually
a representation of the abelianisation $H_{a} = H/[H, H]$. Now, every connected abelian affine algebraic group is isomorphic to the direct product of $G_{u} \times G_{s}$ where $G_{u}$ is a connected group whose elements are all unipotent and $G_{s}$ is a connected group whose elements are all semisimple, see \cite [Chapter 1]{Sha}. Also since $G_{u}$ is unipotent so it has no nontrivial characters. Furthermore $G_{s}$ is isomorphic to a direct product $\IC^{\ast} \times \cdots \times \IC^{\ast}$ of groups isomorphic to the multiplicative group $\IC^{\ast}$. In this case $G_{u}$ is isomorphic to a product $\IC^{+} \times \cdots \times \IC^{+}$ of groups isomorphic to the additive group $\IC^{+}$. Therefore the abelian group $H_{a}$ is isomorphic to a product $V \times T_{1}$ of a vector space and a torus
$T_{1}$, where $V\cong \IC^{+} \times \cdots \times \IC^{+}$ and $T_{1}= \IC^{\ast} \times \cdots \times \IC^{\ast}$.

If $\IT_{H} = T \cap H$ is the maximal torus of $H$ for some choice of $T$ then the natural map $\IT_{H} \rightarrow T_{1}$ is surjective; see \cite[p.136]{Hum} and result follows because we want $M$ as an $H$ module on which $H$ acts as a character but $\IT_{H} \rightarrow T_{1}$ is surjection so it is enough to take the action of $\IT_{H}$ on $M$ which acts by determinant.

\end{proof}

\section{Examples and further applications}

It is well known that a  scheme is called Gorenstein if it is Cohen-Macaulay and the canonical divisor $K$ is Cartier. Theorem \ref{s!theo} provides an alternative way in terms of representation theory  to check one part of a variety being Gorenstein. We give two examples that illustrate Theorem \ref{s!theo}, by satisfying its hypothesis and being Gorenstein. 
\subsection{Secant variety of the Grassmannian $\Gr(2,n)$} 
We denote by \newline $\Omega= \Sec^{2}( \Gr(2,n))$ the second secant variety of the Grassmannian $\Gr(2,5)$ in its Pl\"{u}cker embedding, where $n\geq 5$. The affine cone $\Omega_{0}\subset \bigwedge^{2}\IC^{n}$ is the orbit of $v=e_{1}\wedge f_{1}+e_{2} \wedge f_{2}$ under the action of $G=\GL(n,\IC)$. The subgroup $H$  of $G$ that stabilises the $v$ is given by
\[
 H =
 \begin{pmatrix}
 A_{4\times 4} & *_{4\times n-4} \\ 
 0_{(n-4)\times 4} & *_{(n-4)\times (n-4)}\\
 \end{pmatrix},
  \]
where $A\in \Sp(4,\IC)$ and (*) means there are no restrictions on these entries.
We know that the Lie algebras of $G$ and $\Sp(4, \IC)$ have dimension $n^{2}$ and $10$, respectively, therefore $\mathfrak{g/h}$ has dimension $4n-10$.
The restricted torus $\IT_{H}$ is given by
\[
 \begin{pmatrix}
  t_{1} & 0 & \cdots & 0 \\
  0 & t_{2} & \cdots & 0 \\
  \vdots  & \vdots  & \ddots & \vdots \\
  0 & 0 & \cdots & t_{n} \\
  \end{pmatrix}
    \]
with $t_{1}t_{2}t_{3}t_{4}=1$.

Here we calculate the canonical class using action of $\IT_{H}$ on $\mathfrak{g}/\mathfrak{h}$, when we take the action of $\IT_{H}$  on $\mathfrak{g/h}$ there are a total of $4n-10$ weight spaces for $\IT_{H}$. There are six weight spaces with weight $1$. The product of the remaining weights is given by
\[
\dfrac{ (t_{5} \cdots t_{n})^{4} }{(t_{1} \cdots t_{4})^{n-4}} 
.
\]
We know that in case of the restricted torus $t_{1}t_{2}t_{3}t_{4}=1$, hence the canonical class $K_{G/H}$ of the second secant variety  is a multiple of ($t_{5} \dots t_{n})^{4}$ which is the determinant of the restricted torus $\IT_{H}$. This verifies the determinant condition from theorem \ref{s!theo}.
\subsection{Rational normal curve}
Suppose $G=\GL(2,\IC)$ acts on $\IC^{2}$. We are consider orbit of the highest weight vector $e_{1}$.
In this case $\IP^{1}=G/P=\IP(G\cdot e_{1})$, where 
\[
P=
\begin{pmatrix}
  \ast & \ast  \\
  0 & \ast  \\
  
  \end{pmatrix}
  = 
  \{   
  \begin{pmatrix}
  a_{11} & a_{12}  \\
  0 & a_{22}  \  
  
  \end{pmatrix}\\ \vert a_{11}a_{22} \neq 0 
  \}.
  \] 

Let $\chi_{k}: P\rightarrow \IC^{\times}$ be the character for $P$ given by $a_{11}^{k}$ and the kernel that corresponds to this character is given by
\[
H_{k}=
\begin{pmatrix}
  \mu_{k} & \ast  \\
  0 & \ast  \\
  
  \end{pmatrix}, 
\] 
where $\mu_{k}$ is a primitive $k$th root of unity.
We can give coordinates on one affine piece by $\mathfrak{g}/\mathfrak{h}_{k}$. We want to calculate the  product of the weights that are only weights for $G$ but not of $H_{k}$ under the action of the torus $\IT_{H}\subset G$ on $\mathfrak{g}/\mathfrak{h}_{k}$, where $\mathfrak{g}$ and $\mathfrak{h}_{k}$ denote the Lie algebras of $G$ and $H_{k}$ respectively.

The maximal torus 
\[
\IT_{H}=
\begin{pmatrix}
  t_{1} & 0 \\
  0 & _{t_{2}} \\
  
  \end{pmatrix} 
\]
acts on 
\[
\begin{pmatrix}
  0 & 0 \\
  1 & 0 \\
  
  \end{pmatrix} 
\] 
by  
\[
\IT_{H}
\begin{pmatrix}
  0 & 0 \\
  1 & 0 \\
  
  \end{pmatrix} 
  \IT_{H}^{-1}
\]
 and we get  
\[
\begin{pmatrix}
  0 & 0 \\
  1 & 0 \\
  
  \end{pmatrix} 
 \] 
  as weight vector with weight $\dfrac{t_{2}}{t_{1}}$, where \[
\begin{pmatrix}
  0 & 0 \\
  1 & 0 \\
  
  \end{pmatrix} 
 \]  is the weight vector for $G$ and not for $H$. As we know that $\chi_{k}$ is the character for $P$, when we restrict this to the maximal torus then this character is given by $t_{1}^{k}$. We can see that  the canonical differential is a multiple of the trivial determinant of the $\IT_{H}$ exactly when $k=1 \text{ or  } 2$.

\bigskip 
 
{\bf Acknowledgements:} I wish to thank Dmitriy Rumynin for his valuable suggestions to prove the main result. This work was mainly carried out during my  PhD studies at the University of Warwick under the supervision of Miles Reid, to whom I would like to express my deepest gratitude. I acknowledge useful scientific discussions with Stavros Papadakis and Stephen Coughlan.

\bibliographystyle{alpha}
\bibliography{paper}

\begin{thebibliography}{Hum75}

\bibitem[Dei00]{Eqi}
Anton Deitmar.
\newblock Differential operators on equivariant vector bundles over symmetric
  spaces.
\newblock {\em Electron. J. Differential Equations}, pages No. 59, 8 pp.
  (electronic), 2000.

\bibitem[FH91]{FH}
William Fulton and Joe Harris.
\newblock {\em Representation theory}, volume 129 of {\em Graduate Texts in
  Mathematics}.
\newblock Springer-Verlag, New York, 1991.
\newblock A first course, Readings in Mathematics.

\bibitem[Hum75]{Hum}
James~E. Humphreys.
\newblock {\em Linear algebraic groups}.
\newblock Springer-Verlag, New York, 1975.
\newblock Graduate Texts in Mathematics, No. 21.

\bibitem[Nak90]{Naka}
Mikio Nakahara.
\newblock {\em Geometry, topology and physics}.
\newblock Graduate Student Series in Physics. Adam Hilger Ltd., Bristol, 1990.

\bibitem[Sha95]{Sha}
I.~R. Shafarevich, editor.
\newblock {\em Algebra. {IX}}, volume~77 of {\em Encyclopaedia of Mathematical
  Sciences}.
\newblock Springer-Verlag, Berlin, 1995.
\newblock Finite groups of Lie type. Finite-dimensional division algebras, A
  translation of {{\i}t Algebra. 9} (Russian), Vsesoyuz. Inst. Nauchn. i Tekhn.
  Inform. (VINITI), Moscow, 1992 [ MR1170352 (93a:00017)], Translation edited
  by A. I. Kostrikin and I. R. Shafarevich.

\end{thebibliography}

\end{document}